\begin{document}

\title{Two versions of a specific natural extension}
\author{Karma Dajani}
\address{Department of Mathematics\\
Utrecht University\\
Postbus 80.000\\
3508 TA Utrecht\\
the Netherlands} \email{dajani@math.uu.nl}
\author{Charlene Kalle}
\address{Department of Mathematics\\
Utrecht University\\
Postbus 80.000\\
3508 TA Utrecht\\
the Netherlands} \email{kalle@math.uu.nl}
\subjclass{Primary, 37A05, 11K55.} \keywords{greedy expansion, natural extension, absolutely continuous invariant measure}

\maketitle

\begin{abstract}
We give two versions of the natural extension of a specific greedy $\beta$-transformation with deleted digits. We use the natural extension to obtain an explicit expression for the invariant measure, equivalent to the Lebesgue measure, of this $\beta$-transformation.
\end{abstract}

\newtheorem{prop}{Proposition}[section]
\newtheorem{theorem}{Theorem}[section]
\newtheorem{lemma}{Lemma}[section]
\newtheorem{cor}{Corollary}[section]
\newtheorem{remark}{Remark}[section]
\theoremstyle{definition}
\newtheorem{defn}{Definition}[section]
\newtheorem{ex}{Example}[section]
\newcommand{\lex}{<_{\text{lex}}}
\newcommand{\T}{\mathcal T}

\section{Introduction}
The classical greedy $\beta$-transformation, $T_c$, is defined for each real number $\beta >1$ and has been studied by a large number of people. It is defined from the interval $[ 0, \frac{\lfloor \beta \rfloor}{\beta -1}]$ to itself and the definition is as follows.
$$T_c x = \left\{
\begin{array}{ll}
\beta x -j, & \text{if } x \in [\frac{j}{\beta}, \frac{j+1}{\beta}), \; j \in \{ 0,1, \ldots, \lfloor \beta \rfloor -1 \},\\
\beta x - \lfloor \beta \rfloor, & \text{if } x \in [ \frac{\lfloor \beta \rfloor}{\beta}, \frac{\lfloor \beta \rfloor}{\beta -1}],
\end{array}
\right.
$$
where $\lfloor \beta \rfloor$ indicates the largest integer less than or equal to $\beta$. The importance of this transformation lies in the fact that it can be used to generate $\beta$-expansions for all elements in the interval $[ 0, \frac{\lfloor \beta \rfloor}{\beta -1}]$ in the following way. Let $x \in [ 0, \frac{\lfloor \beta \rfloor}{\beta -1}]$ and define the sequence of digits $\{ b_n \}_{n \ge 1}$ by setting
$$ b_1 = b_1(x) = \left\{
\begin{array}{ll}
j, & \text{if } x \in [\frac{j}{\beta}, \frac{j+1}{\beta}), \; j \in \{ 0,1, \ldots, \lfloor \beta \rfloor -1 \},\\
\lfloor \beta \rfloor, & \text{if } x \in [ \frac{\lfloor \beta \rfloor}{\beta}, \frac{\lfloor \beta \rfloor}{\beta -1}],
\end{array}
\right.
$$
and for $n \ge 1$, set $b_n = b_n(x) = b_1(T_c^{n-1}x)$. Then $T_c x = \beta x - b_1$ and inverting this relation gives $x = \frac{b_1}{\beta} + \frac{T_c x}{\beta}$. Repeating this $n$ times leads to $ x = \sum_{i=1}^{n} \frac{b_i}{\beta^i} + \frac{T_c^n x}{\beta^n}$ and for $n \to \infty$, this converges to
$$x= \sum_{i=1}^{\infty} \frac{b_i}{\beta^i}.$$
This last expression is called a $\beta$-expansion of $x$ with digits in the set $\{ 0,1, \ldots, \lfloor \beta \rfloor \}$. More specifically, the expansion obtained by iterating the transformation $T_c$ is called the greedy $\beta$-expansion of $x$, since for each $n \ge 1$, if $b_1, \ldots, b_{n-1}$ are known, then $b_n$ is the largest element of the set $\{ 0, 1, \ldots, \lfloor \beta \rfloor \}$, such that
$$\sum_{i=1}^{n} \frac{b_i}{\beta^i} \le x.$$
There exists an invariant measure for $T_c$, that is absolutely continuous with respect to the Lebesgue measure. From now on, we will call such a measure an {\it acim} and we will use $\lambda$ to denote the 1-dimensional Lebesgue measure. The acim for $T_c$ has the interval $[0,1)$ as its support. In 1957 R\'enyi proved the existence of such a measure (\cite{Ren1}) and in 1959 and 1960 Gel'fond and Parry gave, independently of one another, an explicit expression of the density of this measure (see \cite{Gel1} and \cite{Par1}). This density function $h_c$ is given by
$$
h_c: [0,1) \to [0,1) : x \mapsto \frac{1}{F(\beta)} \sum_{n=0}^{\infty} \frac{1}{\beta^n} 1_{[0, T_{c}^n 1)}(x),
$$
where $F(\beta) =  \int_0^1 \sum_{x < T_{c}^n 1} \frac{1}{\beta^n} d\lambda$ is a normalizing constant.\\
\indent The greedy $\beta$-transformation with deleted digits is a generalization of the classical greedy $\beta$-transformation. For each $\beta >1$ and each set of real numbers $A= \{ a_0, a_1, \ldots, a_m\}$ satisfying
\begin{itemize}
\item[(i)] $a_0=0$,
\item[(ii)] $a_0 < a_1 < \ldots < a_m$,
\item[(iii)] $\max_{1\le j \le m} (a_j - a_{j-1}) \le \frac{a_m}{\beta -1}$,
\end{itemize}
the greedy $\beta$-transformation with deleted digits is defined from the interval $[ 0, \frac{a_m}{\beta-1} ]$ to itself by
$$ T_{dd} \; x = \left\{
\begin{array}{ll}
\beta x - a_j, & \text{if } x \in [ \frac{a_j}{\beta}, \frac{a_{j+1}}{\beta} ], \; j \in \{0, \ldots, m-1\},\\
\beta x - a_m, & \text{if } x \in [ \frac{a_m}{\beta}, \frac{a_m}{\beta-1} ].
\end{array}
\right.
$$
Notice that we get $T_c$ by taking $A=\{ 0,1, \ldots , \lfloor \beta \rfloor \}$. The transformation was first defined in \cite{DK2} and its definition was based on a recursive algorithm given by Pedicini in \cite{Ped1}. In \cite{DK2} the greedy $\beta$-transformations with deleted digits are also defined for digit sets $A$, not satisfying $a_0=0$, but it is shown in the same paper that these transformations are isomorphic to the one given above. So without loss of generality we can assume that $a_0=0$. The transformation $T_{dd}$ can be used to generate $\beta$-expansions with digits in the set $A$ for all elements in the interval $[ 0, \frac{a_m}{\beta-1} ]$ in exactly the same way as described above for the classical transformation. For $x \in [ 0, \frac{a_m}{\beta-1} ]$, set 
$$d_1 = d_1(x) = \left\{
\begin{array}{ll}
a_j, & \text{if } x \in [ \frac{a_j}{\beta}, \frac{a_{j+1}}{\beta} ], \; j \in \{0, \ldots, m-1\},\\
a_m, & \text{if } x \in [ \frac{a_m}{\beta}, \frac{a_m}{\beta-1} ],
\end{array}
\right.$$
and for $n \ge 1$, set $d_n = d_n (x) = d_1 (T_{dd}^{n-1})$. Then $T_{dd} x =\beta x -d_1$ and for each $x \in [ 0, \frac{a_m}{\beta-1} ]$ we can form the expression
\begin{equation}\label{q:greedyexpdd}
x = \sum_{n=1}^{\infty} \frac{d_n}{\beta^n}.
\end{equation}
Expression (\ref{q:greedyexpdd}) is called the greedy $\beta$-expansion with deleted digits of $x$. This expansion is called greedy for the same reasons as before. At each step the digit given by $T_{dd}$ is the largest element of the set $A$ that ``fits in that position of the expansion'', i.e. if $d_1, \ldots, d_{n-1}$ are already known, then $d_n$ is the largest element of $A$, such that
$$ \sum_{i=1}^{n} \frac{d_n}{\beta^n} \le x.$$
Pedicini studied $\beta$-expansions with deleted digits in \cite{Ped1}.\\
\indent In \cite{DK1} it is shown that the transformation $T_{dd}$ admits an acim that is unique and ergodic. The support of this invariant measure is an interval of the form $[0, a_{j_0}-a_{j_0-1} )$, where
$$ j_0 = \min \{j: T_{dd}[0,a_j-a_{j-1}) \subseteq [0, a_j-a_{j-1}) \; \lambda \text{ a.e. }, 1 \le j \le m \}.$$
An explicit expression for the density of this measure, however, is given only under certain conditions. In this paper we will construct two versions of the natural extension of the dynamical system 
$$([0, a_{j_0}-a_{j_0-1} ), \mathcal B([0, a_{j_0}-a_{j_0-1} )), \mu, T),$$
where $ \mathcal B([0, a_{j_0}-a_{j_0-1} ))$ is the Borel $\sigma$-algebra on $[0, a_{j_0}-a_{j_0-1} )$, $T$ is the specific greedy $\beta$-transformation with deleted digits that will be defined below, and $\mu$ is the probability measure on $([0, a_{j_0}-a_{j_0-1} ), \mathcal B([0, a_{j_0}-a_{j_0-1} ))$, obtained by ``pulling back'' the invariant measure that we will define on the natural extension. Notice that the dynamical system $([0, a_{j_0}-a_{j_0-1} ), \mathcal B([0, a_{j_0}-a_{j_0-1} )), \mu, T)$ is not invertible. The natural extension is the smallest invertible dynamical system, that contains this system. The original system can be obtained from the natural extension through a surjective, measurable and measure preserving map that preserves the dynamics of both systems. This map is called a factor map and in this paper it will simply be the projection onto the first coordinate. For more information on natural extensions, see \cite{Roh1} or \cite{Cor1}. By defining the right measure on the natural extension, we can obtain an expression for the density function of the invariant measure of the specific transformation $T$. Maybe one of the versions given in this paper can serve as a starting point for finding an explicit expression for the invariant measure of the greedy $\beta$-transformations with deleted digits in general.\\
\indent The transformation we will consider is the greedy $\beta$-transformation with deleted digits with $\beta = \frac{1+\sqrt 5}{2}$, the positive solution to the equation $x^2-x-1=0$, and with digit set $A = \{ 0,2,3\}$. The support of the acim is the interval $[0,2)$ and therefore we will define the transformation on this interval only. Let the partition $\Delta = \{ \Delta(0), \Delta(2), \Delta(3) \}$ of the interval $[0,2)$ be given by
$$ \Delta(0) = \left[ 0, \frac{2}{\beta} \right), \quad \Delta(2) = \left[ \frac{2}{\beta}, \frac{3}{\beta} \right), \quad \Delta(3) = \left[ \frac{3}{\beta}, 2 \right).$$ 
Then $T: [0,2) \to [0,2)$ is defined by $Tx = \beta x - j$ on $\Delta(j)$, $j \in \{0,2,3\}$. We will use the first section of this paper to fix some notation. In the second and third sections we define two versions of the natural extension of the dynamical system $([0,2), \mathcal B ([0,2)), \mu, T)$. For the classical greedy $\beta$-transformation, versions of the natural extension are given in \cite{DKS} and by Brown and Yin in \cite{Bro1}. The first version we will give is a generalization of the natural extension defined in \cite{DKS}. The second version is defined on a subset of $\mathbb R^2$ and uses the transformation from the first version. We end the paper with a concluding remark.

\section{Expansions and fundamental intervals}

The transformation $T: [0,2) \to [0,2)$ is defined by setting $Tx = \beta x - j$ on $\Delta(j)$, $j \in \{0,2,3\}$. We can use this transformation to generate expansions of all points in the interval $[0,2)$, with base $\beta$ and digits in the set $\{0,2,3\}$ as was described in the introduction. So for all $x \in [0,2)$ we have the expression (\ref{q:greedyexpdd}). We also write $x =_{\beta} d_1 d_2 d_3 \ldots$, which is understood to mean the same as (\ref{q:greedyexpdd}). Two expansions that will play an important role in what follows are the expansions of the points 1 and $\frac{1}{\beta^3}$. Notice that $\frac{1}{\beta^3} = 2\beta - 3$ would be the image of 2 under $T$ if $T$ were defined on the closed interval $[0,2]$. We have
\begin{eqnarray}
1 &=& \sum_{n=1}^{\infty} \frac{d_n^{(2)}}{\beta^n} = \frac{2}{\beta^2} + \frac{2}{\beta^5} + \frac{2}{\beta^8} + \frac{2}{\beta^{11}} + \ldots =_{\beta} 02\overline{002}, \label{q:exp1}\\
\frac{1}{\beta^3} &=& \sum_{n=1}^{\infty} \frac{d_n^{(3)}}{\beta^n} =  \frac{2}{\beta^4} + \frac{2}{\beta^7} + \frac{2}{\beta^{10}} + \ldots =_{\beta} 00\overline{002},\label{q:exp2}
\end{eqnarray}
where the bars on the right hand side of the previous equations indicate a repeating sequence in the expansions. With the {\it orbit of a point} $x$ {\it under} $T$ we mean the set $ \{ T^n x : n \ge 0\}$. In Figure \ref{f:goldenattractor}, you can see the graph of $T$ and the orbits of the points 1 and $\frac{1}{\beta^3}$.
\begin{figure}[h]
\centering
{\includegraphics[height=4cm]{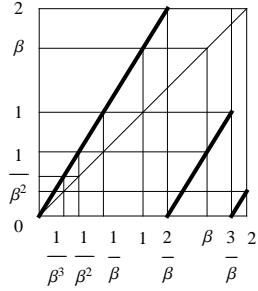}}
\caption{The transformation $T$ and the orbits of $1$ and $\frac{1}{\beta^3}$.}
\label{f:goldenattractor}
\end{figure}

Using $T$ and $\Delta$, we can define a sequence of partitions $\{ \Delta^{(n)} \}_{n \ge 1}$ of $[0,2)$ by setting $ \Delta^{(n)} = \bigvee_{i=0}^{n-1} T^{-i} \Delta$.
We call the elements of $\Delta^{(n)}$ {\it fundamental intervals of rank} $n$. Since they will have the form
$$ \Delta(b_0) \cap T^{-1} \Delta(b_1) \cap \ldots \cap T^{-(n-1)} \Delta(b_{n-1})$$
for some $b_0, b_1, \ldots, b_{n-1} \in \{0,2,3 \}$, we will denote them by $\Delta(b_0 \ldots b_{n-1})$. We will call $\Delta(b_0 \ldots b_{n-1}) \in  \Delta^{(n)}$ {\it full} if $T^n \Delta(b_0 \ldots b_{n-1}) = [0,2)$ and {\it non-full} otherwise. Notice that a fundamental interval of rank $n$ specifies the first $n$ digits, $d_1, \ldots, d_n$, of the greedy expansion of the elements it contains. So,
$$ \Delta(b_0 \ldots b_{n-1}) = \{ x \in [0,2) : d_i(x) = b_{i-1}, \; 1 \le i \le n \}.$$

For full fundamental intervals, we have the following obvious lemma.
\begin{lemma}\label{l:full}
Let $\Delta(a_0 \ldots a_{p-1})$ and $\Delta(b_0 \ldots b_{q-1})$ be two full fundamental intervals of rank $p$ and $q$ respectively. Then the set $\Delta(a_0 \ldots a_{p-1} b_0 \ldots b_{q-1})$
is a full fundamental interval of rank $p+q$.
\end{lemma}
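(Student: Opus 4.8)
The plan is to exploit the fact that on a full fundamental interval of rank $p$ the map $T^p$ is an affine bijection onto all of $[0,2)$, and then simply compose. Write $A = \Delta(a_0 \ldots a_{p-1})$ and $B = \Delta(b_0 \ldots b_{q-1})$. First I would unravel the definitions to record the set-theoretic identity
$$\Delta(a_0 \ldots a_{p-1} b_0 \ldots b_{q-1}) = A \cap T^{-p} B,$$
which follows by intersecting the $p+q$ cylinder conditions, separating the first $p$ of them (which define $A$) from the last $q$, and pulling the common factor $T^{-p}$ out of the latter block.

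The key step is to show that $T^p$ restricted to $A$ is a strictly increasing affine bijection onto $[0,2)$. Since $x \in A$ forces $T^i x \in \Delta(a_i)$ for $0 \le i \le p-1$, and $T$ acts as $y \mapsto \beta y - a_i$ on $\Delta(a_i)$, an easy induction gives $T^p x = \beta^p x - c$ on $A$ with constant $c = \sum_{i=0}^{p-1} \beta^{p-1-i} a_i$. Thus $T^p|_A$ is affine with slope $\beta^p > 0$, hence injective; and since $A$ is full, $T^p(A) = [0,2)$, so $T^p|_A : A \to [0,2)$ is a bijection.

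From here the conclusion is immediate. Because $T^p|_A$ is a bijection onto $[0,2)$ and $B \subseteq [0,2)$ is a nonempty fundamental interval, the preimage $A \cap T^{-p} B = (T^p|_A)^{-1}(B)$ is nonempty and satisfies $T^p\big(A \cap T^{-p} B\big) = B$. Applying $T^q$ and using that $B$ is full, i.e.\ $T^q B = [0,2)$, we obtain
$$T^{p+q}\,\Delta(a_0 \ldots a_{p-1} b_0 \ldots b_{q-1}) = T^q\big(T^p(A \cap T^{-p} B)\big) = T^q B = [0,2),$$
so the concatenation is a full fundamental interval of rank $p+q$. As the lemma is labelled ``obvious,'' there is no real obstacle here; the only point requiring a moment's care is confirming that $T^p|_A$ is genuinely a bijection rather than merely surjective, which is exactly what the affine-with-positive-slope computation guarantees, and checking that the resulting set is nonempty so that it really is a fundamental interval.
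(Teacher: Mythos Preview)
Your proof is correct. The paper does not actually give a proof of this lemma---it simply calls it ``obvious'' and proceeds---so there is no approach to compare; your argument is precisely the routine verification one has in mind when calling such a statement obvious.
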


From the next lemma, it follows that the full fundamental intervals generate the Borel $\sigma$-algebra on $[0,2)$.
\begin{lemma}\label{l:generate}
For each $n \ge 1$, let $D_n$ be the union of those full fundamental intervals of rank $n$ that are not subsets of any full fundamental interval of lower rank. Then
$$ \sum_{n=1}^{\infty} \lambda(D_n) =2.$$
\end{lemma}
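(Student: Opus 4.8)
The plan is to show that the full fundamental intervals exhaust $[0,2)$ up to a Lebesgue-null set, by proving that the total measure of the \emph{non-full} region left uncovered after $n$ steps tends to $0$. First I would record that the sets $D_n$ are pairwise disjoint: if $x$ lay in both $D_n$ and $D_m$ with $n<m$, then the rank-$m$ fundamental interval containing $x$ would be contained in the rank-$n$ one, which is full, contradicting the defining property of $D_m$. Hence $\sum_{n\ge1}\lambda(D_n)=\lambda(\bigcup_{n\ge1}D_n)$. Writing $U_n=\bigcup_{k\le n}D_k$ and letting $R_n$ be the union of those rank-$n$ fundamental intervals contained in no full fundamental interval of rank $\le n$, one checks that, up to the countable set of endpoints, $[0,2)\setminus U_n=R_n$. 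Thus $\lambda(U_n)=2-\lambda(R_n)$, and it suffices to prove $\lambda(R_n)\to0$.

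The heart of the argument is that only finitely many ``shapes'' of non-full intervals occur. Since $T$ sends the left endpoint of each element of $\Delta$ to $0$ and has slope $\beta$, an easy induction shows that $T^n$ maps every fundamental interval of rank $n$ onto an interval of the form $[0,c)$; such an interval then has length $c/\beta^n$, and is full exactly when $c=2$. Passing from rank $n$ to rank $n+1$ subdivides $[0,c)$ along $\Delta$: a full sub-interval splits off precisely when $\Delta(0)\subseteq[0,c)$, i.e. when $c\ge 2/\beta$, and the right endpoint of the unique non-full successor equals $Tc$ (read as $\lim_{x\to c^-}Tx$). The admissible right endpoints are therefore generated by iterating $T$ on the rank-$1$ endpoints $1$ and $1/\beta^3$, and by the expansions (\ref{q:exp1}) and (\ref{q:exp2}) the orbits of $1$ and $1/\beta^3$ are eventually periodic, taking together only the five values $\{1/\beta^3,1/\beta^2,1/\beta,1,\beta\}$, all $<2$.

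I would then encode the measures by letting $m_n^{(c)}$ be the total length of the non-full rank-$n$ intervals with non-full ancestors whose image is $[0,c)$, so that $\lambda(R_n)=\sum_c m_n^{(c)}$. The observations above give a linear recursion on these five quantities: for $c<2/\beta$ the whole interval passes to the successor endpoint $\beta c$, while for $c=\beta$ only the fraction $(\beta-2/\beta)/\beta=\beta^{-3}$ survives as a non-full interval (with endpoint $1/\beta$) and the remainder becomes full. Summing yields $\lambda(R_{n+1})=\lambda(R_n)-(1-\beta^{-3})\,m_n^{(\beta)}$, so $\lambda(R_n)$ is non-increasing and convergent; the summable increments force $m_n^{(\beta)}\to0$, and back-substituting through the finitely many recursions shows every $m_n^{(c)}\to0$, hence $\lambda(R_n)\to0$. (Equivalently, the $5\times5$ transition matrix has spectral radius $\beta^{-1}<1$, giving geometric decay.) Letting $n\to\infty$ in $\lambda(U_n)=2-\lambda(R_n)$ then gives $\sum_{n\ge1}\lambda(D_n)=2$.

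The main obstacle is the structural claim that only finitely many endpoint values $c$ occur. Once the eventual periodicity of the orbits of $1$ and $1/\beta^3$ pins these down to the five listed values, the measure bookkeeping and the decay of $\lambda(R_n)$ are routine; the delicate point is verifying that no endpoint ever lands on a partition boundary $2/\beta$ or $3/\beta$, so that the case analysis producing the recursion is clean.
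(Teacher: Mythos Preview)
Your argument is correct, but it takes a genuinely different route from the paper's. The paper proceeds by direct enumeration: it identifies $D_1=\Delta(0)$, $D_3=\Delta(200)$, and for $k\ge1$ writes $D_{3(k+1)}$ as the union of the two intervals $\Delta(202(002)^{k-1}000)$ and $\Delta(300(002)^{k-1}000)$, with all other $D_n$ empty; summing the resulting geometric series $\frac{2}{\beta}+\frac{2}{\beta^3}+\sum_{k\ge1}\frac{4}{\beta^{3(k+1)}}$ gives $2$ in one line. You instead track the complementary non-full mass $\lambda(R_n)$, reduce it to a linear recursion on the finitely many possible image-endpoints $\{1/\beta^3,1/\beta^2,1/\beta,1,\beta\}$, and show geometric decay via the $3$-cycle $1/\beta\to1\to\beta\to1/\beta$ with contraction factor $\beta^{-3}$. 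The paper's computation is shorter and more explicit for this particular example; your framework is more portable, since it would handle any greedy $\beta$-transformation with deleted digits whose branch-endpoint orbits are eventually periodic, without having to name the $D_n$'s one by one. Both proofs rest on the same structural input, namely the eventual periodicity recorded in the expansions of $1$ and $1/\beta^3$, so the point you single out as delicate is exactly the one the paper also leans on.
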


\begin{proof}
Notice that
$$ \lambda (D_1) = \lambda (\Delta(0)) = \frac{2}{\beta}, \quad \lambda (D_3) = \lambda (\Delta(200)) = \frac{2}{\beta^3}$$
and for $k \ge 1$,
$$ \lambda (D_{3(k+1)}) = \lambda (\Delta(202\underbrace{002\ldots 002}_{k-1 \text{ times}}000) \cup \Delta(300\underbrace{002\ldots 002}_{k-1 \text{ times}}000)) = \frac{4}{\beta^{3(k+1)}}.$$
For all the other values of $n$, $D_n = \emptyset$. So
$$
\sum_{n=1}^{\infty} \lambda(D_n) = \frac{2}{\beta} + \frac{2}{\beta^3} + \sum_{k=1}^{\infty} \frac{4}{\beta^{3(k+1)}} =  \frac{2}{\beta} + \frac{2}{\beta^3} + \frac{4}{\beta^3} \left[ \frac{1}{1-1/\beta^3}-1 \right] =2.
\qedhere $$
\end{proof}

\begin{remark}\label{r:generate}
{\rm The fact that $\Delta(0)$ is a full fundamental interval of rank 1 allows us to construct full fundamental intervals of arbitrary small Lebesgue measure. This together with the previous lemma guarantees that we can write each interval in $[0,2)$ as a countable union of full fundamental intervals. Thus, the full fundamental intervals generate the Borel $\sigma$-algebra on $[0,2)$.
}
\end{remark}

\section{Two rows of rectangles}
To find an expression for the acim of $T$, we will define two versions of the natural extension of the dynamical system $([0,2), \mathcal B([0,2)), \mu, T)$. For the definition of the first version, we will use a subcollection of the collection of fundamental intervals. For $n \ge 1$, let $B_n$ denote the collection of all non-full fundamental intervals of rank $n$ that are not a subset of any full fundamental interval of lower rank. The elements of $B_n$ can be explicitly given as follows.
$$ B_1 = \{ \Delta(2), \Delta(3) \}, \quad B_2 = \{ \Delta(20), \Delta(30)\}$$
and for $k \ge 1$,
\begin{eqnarray*}
B_{3k} &=& \{ \Delta (202\underbrace{002 \ldots 002}_{k-1 \text{ times}}), \Delta (300\underbrace{002 \ldots 002}_{k-1 \text{ times}}) \},\\
B_{3k+1} &=& \{ \Delta (202\underbrace{002 \ldots 002}_{k-1 \text{ times}}0), \Delta (300\underbrace{002 \ldots 002}_{k-1 \text{ times}}0) \},\\
B_{3k+2} &=& \{ \Delta (202\underbrace{002 \ldots 002}_{k-1 \text{ times}}00), \Delta (300\underbrace{002 \ldots 002}_{k-1 \text{ times}}00) \}.
\end{eqnarray*}
Then
$$ T \Delta(2) = [0,1), \; T\Delta(3) = [0, 1/ \beta^3 ), \; T^2 \Delta(20) = [0,\beta), \; T^2\Delta(30) = [0, 1/ \beta^2)$$
and for $k \ge 1$
\begin{eqnarray*}
T^{3k} \Delta (202\underbrace{002 \ldots 002}_{k-1 \text{ times}}) &=& T^{3k} \Delta (300\underbrace{002 \ldots 002}_{k-1 \text{ times}}) = [0, 1/ \beta),\\
T^{3k+1} \Delta (202\underbrace{002 \ldots 002}_{k-1 \text{ times}}0) &=& T^{3k+1} \Delta (300\underbrace{002 \ldots 002}_{k-1 \text{ times}}0) = [0,1),\\
T^{3k+2} \Delta (202\underbrace{002 \ldots 002}_{k-1 \text{ times}}00) &=&T^{3k+2} \Delta (300\underbrace{002 \ldots 002}_{k-1 \text{ times}}00) = [0,\beta).
\end{eqnarray*}
For each $n \ge 1$, $B_n$ contains exactly two elements, one which has $b_0=2$ and one for which $b_0 =3$. So for fixed $b_0$, we can speak of the element $\Delta (b_0 \ldots b_{n-1})$ of $B_n$. We will define two sequences of sets $\{ R_{(2,n)} \}_{n \ge 1}$ and $\{ R_{(3,n)} \}_{n \ge 1}$, that represent the images of the elements of $B_n$ under $T^n$ and we will order them in two rows by assigning two extra parameters to each rectangle. Let
$$ R_0 = [0,2) \times [0,2) \times \{0\} \times \{ 0 \}$$
and for each $n \ge 1$, $j \in \{2,3\}$ define the sets
$$ R_{(j,n)} = T^n \Delta(j d^{(j)}_1 \ldots d^{(j)}_{n-1}) \times \Delta(\underbrace{0 \ldots 0}_{n \text{ times}}) \times \{ j \} \times \{ n \},$$
where the digits $d_n^{(j)}$ are the digits from the greedy expansions of $1$ and $\frac{1}{\beta^3}$ as given in (\ref{q:exp1}) and (\ref{q:exp2}). Then $R = R_0 \cup \bigcup_{n=1}^{\infty} (R_{(2,n)} \cup R_{(3,n)})$.
Let $\mathcal B_0$ denote the Borel $\sigma$-algebra on $R_0$ and on each of the rectangles $R_{(j,n)}$, let $\mathcal B_{(j,n)}$ denote the Borel $\sigma$-algebra defined on it. We can define a $\sigma$-algebra on $R$ as the disjoint union of all these $\sigma$-algebras,
$$ \mathcal B = \coprod_{j,n} \mathcal B_{(j,n)} \amalg \mathcal B_0.$$
Let $\bar \lambda$ be the measure on $(R, \mathcal B)$, given by the Lebesgue measure on each rectangle. Then $\bar \lambda (R) = 32 -14\beta$. If we set $\nu = \frac{1}{32-14\beta} \bar \lambda$, then $(R, \mathcal B, \nu)$ will be a probability space.\\
\indent The transformation $\T$ that we are going to define on this space will map $R_{(j,n)}$ onto $R_{(j,n+1)}$ if $\Delta (j d^{(j)}_1 \ldots d^{(j)}_{n-1}0)$ is non-full, otherwise a part of $R_{(j,n)}$ is mapped onto $R_{(j,n+1)}$ and the other part is mapped in $R_0$. We will define $\T$ piecewise on these sets.\\
On $R_0$, let
$$
\T (x,y,0,0) = \left\{
\begin{array}{ll}
(Tx, \frac{y}{\beta}, 0,0) \in R_0, & \mbox{if } x \in \Delta (0),\\
(Tx,\frac{y}{\beta},j,1) \in R_{(j,1)}, & \mbox{if } x \in \Delta (j), \; j \in \{2,3\}
\end{array}
\right.
$$
and for $(x,y,j,n) \in R_{(j,n)}$, let
$$
\T (x,y,j,n) = \left\{
\begin{array}{ll}
(Tx, y^{(j)},0,0) \in R_0, & \mbox{if } \Delta (j d^{(j)}_1 \ldots d^{(j)}_{n-1}0) \text{ is full}\\
&\; \text{and } x \in \Delta(0),\\
(Tx, \frac{y}{\beta},j,n+1)  \in R_{(j,n+1)}, & \mbox{if } \Delta (j d^{(j)}_1 \ldots d^{(j)}_{n-1}0)\\
&\; \text{is non-full or } x \not \in \Delta(0),\\
\end{array}
\right.
$$
where
$$
y^{(j)} =\displaystyle \frac{j}{\beta} + \frac{d_1^{(j)}}{\beta^2}+\frac{d_2^{(j)}}{\beta^3} + \ldots + \frac{d_{n-1}^{(j)}}{\beta^n} + \frac{y}{\beta}.
$$

\noindent Figure \ref{f:piles} shows the space $R$.
\begin{figure}[h]
\centering
\includegraphics[width=11cm]{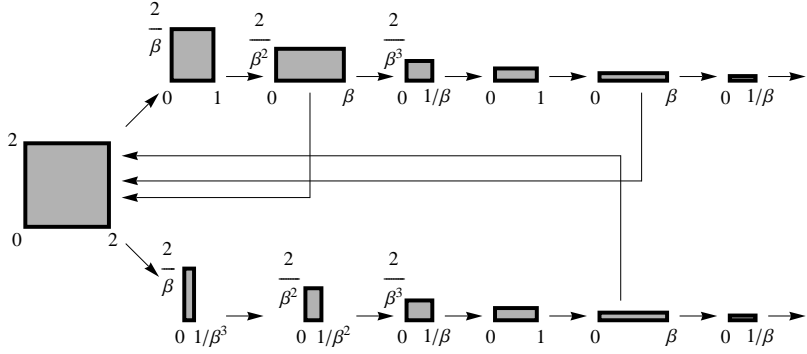}
\caption{The space $R$ consists of all these rectangles.}
\label{f:piles}
\centering
\end{figure}

\begin{remark}\label{r:bijective}
{\rm Notice that for $k\ge 1$, $\T$ maps all rectangles $R_{(2,n)}$ for which $n \neq 3k-1$ and all rectangles $R_{(3,n)}$ for which $n \neq 3k+2$ bijectively onto $R_{(2,n+1)}$ and $R_{(3,n+1)}$ respectively. The rectangles $R_{(2,3k-1)}$ and $R_{(3,3k+2)}$ are partly mapped onto $R_{(2,3k)}$ and $R_{(3,3k+3)}$ and partly into $R_0$. From Lemma \ref{l:generate} it follows that $\T$ is bijective.
}
\end{remark}

Let $\pi_1: R \to [0,2)$ be the projection onto the first coordinate. To show that $(R, \mathcal B, \nu, \T)$ is a version of the natural extension with $\pi_1$ as a factor map, we need to prove all of the following.
\begin{itemize}
\item[(i)] $\pi_1$ is a surjective, measurable and measure preserving map from $R$ to $[0,2)$.
\item[(ii)] For all $x \in R$, we have $(T \circ \pi_1)(x) = (\pi_1 \circ \T)(x)$.
\item[(iii)] $\T:R \to R$ is an invertible transformation.
\item[(iv)] $\mathcal B = \bigvee_{n=0}^{\infty} T^n \pi_1^{-1} (\mathcal B([0,2)))$, where $\bigvee_{n=0}^{\infty} T^n \pi_1^{-1} (\mathcal B([0,2)))$ is the smallest $\sigma$-algebra containing the $\sigma$-algebras $T^n \pi_1^{-1} (\mathcal B ([0,2)))$ for all $n \ge 1$.
\end{itemize}

It is clear that $\pi_1$ is surjective and measurable and that $T \circ \pi_1 = \pi_1 \circ \T$. Since $\T$ expands by a factor $\beta$ in the first coordinate and contracts by a factor $\beta$ in the second coordinate, it is also clear that $\T$ is invariant with respect to the measure $\nu$. Then $\mu = \nu \circ \pi_1^{-1}$ defines a $T$-invariant probability measure on $([0,2), \mathcal B([0,2)))$ and $\pi_1$ is measure preserving. This shows (i) and (ii). The invertibility of $\T$ follows from Remark \ref{r:bijective}, so that leaves only (iv). To prove (iv) we will have a closer look at the structure of the fundamental intervals and we will introduce some more notation.

\vskip .3cm

For a fundamental interval, $\Delta(b_0 \ldots b_q)$, the block of digits $b_0 \ldots b_q$ consists of several subblocks, each of which forms a full fundamental interval itself, except for possibly the last subblock. This last subblock will form a full fundamental interval if $\Delta(b_0 \ldots b_q)$ is full and it will form a non-full fundamental interval otherwise. We take these subblocks a small a possible, i.e. a new subblock starts, as soon as the previous subblock forms a full fundamental interval. Therefore, each of these subblocks consists only of the digit $0$ or is the beginning of the greedy expansion of $1$ or $\frac{1}{\beta^3}$, followed by the digit $0$, except possibly for the last subblock. For example, the block of digits from the fundamental interval $\Delta (2000300002002000)$ can be divided into the three subblocks, $200$, $0$ and $300002002000$. To make this subdivision more precise, we need the notion of return time. For points $(x,y) \in R_0$ define the {\it first return time to} $R_0$ by
$$r_1(x,y) = \min \{ n \ge 1: \T^n (x,y,0,0) \in R_0 \}$$
and for $k \ge 1$, let the $k${\it -th return time to} $R_0$ be given recursively by
$$r_k (x,y) =\min \{ n \ge r_{k-1} (x,y): \T^n (x,y,0,0) \in R_0\}.$$
Notice that this notion depends only on $x$, i.e. for all $y, y' \in R_0$ and all $k \ge 1$, $r_k(x,y)=r_k (x,y')$. So we can write $r_k (x)$ instead of $r_k (x,y)$. In this sense, for each $x \in [0,a_1)$ we can talk about the $k$-th return time of this element. If $\Delta (b_0 \ldots b_{q-1}) \in \Delta^{(q)}$, then for all $n \le q$, $\T^n$ maps the whole set $\Delta(b_0 \ldots b_{q-1}) \times [0,2) \times \{0\} \times \{0\} \subseteq R_0$ to the same rectangle in $R$. So the first several return times to $R_0$, $r_1, \ldots, r_{\kappa}$, are equal for all elements in $\Delta(b_0 \ldots b_{q-1})$. This means we can talk about the $k$-th return time to $R_0$ of this entire fundamental interval $\Delta(b_0 \ldots b_{q-1})$. Now suppose that $\Delta(b_0 \ldots b_{q-1}) \in \Delta^{(q)}$ is a full fundamental interval. Then there is a $\kappa \ge 1$ and there are numbers $r_i$, $1 \le i \le \kappa$ such that $r_i = r_i(x)$ for all $x \in \Delta (b_0 \ldots b_{q-1})$ and $r_{\kappa}=q$. Put $r_0=0$, then we can divide the block of digits $b_0 \ldots b_{q-1}$ into $\kappa$ subblocks $C_1, \ldots, C_{\kappa}$, where
$$ C_i = b_{r_{i-1}} \ldots b_{r_i-1}.$$
So $\Delta(b_0 \ldots b_{q-1}) = \Delta (C_1 \ldots C_{\kappa})$. These subblocks, $C_i$, have the following properties.
\begin{itemize}
\item[(i)] If $|C_i|$ denotes the length of block $C_i$, then $|C_i| = r_i-r_{i-1}$ for all $i \in \{ 1,2, \ldots, \kappa \}$.
\item[(ii)] If $b_{r_i}=0$, then $r_{i+1} = r_i+1$.
\item[(iii)] If $b_{r_i} = j \in \{ 2,3\}$, then the block $C_{i+1}$ is equal to $j$ followed by the first part of the greedy expansion of $1$ if $j=2$ and that of $1/ \beta^3$ if $j=3$. So $ C_{i+1} = j d^{(j)}_1 \ldots d^{(j)}_{|C_{i+1}|-1}$.
\item[(iv)] For all $i \in \{ 1, \ldots, \kappa \}$, $\Delta(C_i)$ is a full fundamental interval of rank $|C_i|$.
\end{itemize}
The above procedure gives for each full fundamental interval $\Delta(b_0 \ldots b_{q-1})$, a subdivision of the block of digits $b_0 \ldots b_{q-1}$ into subblocks $C_1, \ldots ,C_{\kappa}$, such that $\Delta(C_i)$ is a full fundamental interval of rank $| C_i |$ and $\Delta(b_0 \ldots b_{q-1})=\Delta(C_1 \ldots C_{\kappa})$. The next lemma is the last step in proving that the system $(R, \mathcal B, \nu, \T)$ is a version of the natural extension 

\begin{lemma}\label{l:bigvee}
The $\sigma$-algebra $\mathcal B$ on $R$ and the $\sigma$-algebra $\bigvee_{n=0}^{\infty} \T^n \pi_1^{-1}(\mathcal B([0,2)))$ are equal.
\end{lemma}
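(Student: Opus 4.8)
The statement asserts an equality of two $\sigma$-algebras, so the plan is to prove the two inclusions separately. Write $\mathcal G=\bigvee_{n=0}^{\infty}\T^{n}\pi_1^{-1}(\mathcal B([0,2)))$. The inclusion $\mathcal G\subseteq\mathcal B$ is immediate: $\pi_1$ is measurable, and since $\T$ is piecewise affine on each of the pieces $R_0$ and $R_{(j,n)}$ and is a bijection (Remark \ref{r:bijective}), both $\T$ and $\T^{-1}$ are measurable, so every $\T^{n}\pi_1^{-1}(E)$ lies in $\mathcal B$. For the reverse inclusion it suffices, by Remark \ref{r:generate}, to place inside $\mathcal G$ every ``product rectangle'' $\Delta_1\times\Delta_2\times\{j\}\times\{n\}$ (and its analogue in $R_0$) with $\Delta_1,\Delta_2$ full fundamental intervals, since these generate $\mathcal B$. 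The work then splits into three tasks: to generate the pieces $R_0$ and $R_{(j,n)}$ themselves inside $\mathcal G$, to generate the first--coordinate structure, and to generate the second--coordinate structure on each piece. The first coordinate is free, because it is exactly $\pi_1$ and $\pi_1^{-1}(\mathcal B([0,2)))\subseteq\mathcal G$.

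The engine for the second coordinate is the following computation, which I would carry out first. If $\Delta(C_1\ldots C_\kappa)$ is a full fundamental interval of rank $k$, written in its canonical block decomposition, then tracking the second coordinate through the $\kappa$ successive excursions of $\T$ back to $R_0$ yields
\begin{equation*}
\T^{k}\big(\Delta(C_1\ldots C_\kappa)\times[0,2)\times\{0\}\times\{0\}\big)=[0,2)\times\Delta(C_\kappa\ldots C_1)\times\{0\}\times\{0\}.
\end{equation*}
Each excursion over a block $C_i$ divides the second coordinate by $\beta^{|C_i|}$ and, through the formula for $y^{(j)}$, prepends the digits of $C_i$; doing this for $i=1,\ldots,\kappa$ prepends the blocks in reverse, so the resulting second--coordinate interval is the reversed--block interval $\Delta(C_\kappa\ldots C_1)$, which is again full of rank $k$ by Lemma \ref{l:full}. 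Thus $\T^{k}$ turns a vertical slab over a full fundamental interval in $R_0$ into a horizontal slab over the reversed interval. Since block reversal is an involution on the full fundamental intervals, the image interval ranges over all of them as the original does, and by Remark \ref{r:generate} these generate $\mathcal B([0,2))$ in the second coordinate.

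Granting for the moment that the pieces are $\mathcal G$-measurable, the slab computation finishes the argument: intersecting the horizontal slabs $[0,2)\times\Delta\times\{0\}\times\{0\}$ with the vertical slabs $\pi_1^{-1}(\Delta')$ and with $R_0$ produces all product rectangles in $R_0$, whence $\mathcal B_0\subseteq\mathcal G$, and the second--coordinate structure on each $R_{(j,n)}$ is then transported from $R_0$ through the bimeasurable map $\T^{-n}$, which by Remark \ref{r:bijective} carries $R_{(j,n)}$ into $R_0$. That some genuine horizontal slabs already sit in $\mathcal G$ unconditionally can be seen by choosing a full fundamental interval $U\subseteq[\beta,2)$, for instance $\Delta(300000)$ (note $3/\beta>\beta$): since every projection $\pi_1(R_{(j,n)})$ has the form $[0,c)$ with $c\le\beta$, the set $\pi_1^{-1}(U)$ misses all the $R_{(j,n)}$ and lies entirely in $R_0$, so the computation above yields a bona fide slab in $\mathcal G$. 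This foothold shows the mechanism is sound but, as I note below, does not by itself reach the pieces.

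The crux, and the step I expect to be hardest, is precisely showing that the pieces are $\mathcal G$-measurable. The first coordinate does not determine the piece --- a point of $R_0$ and a point of $R_{(2,1)}$ can share the same $(x,y)$ --- so the label must be read off the backward orbit. Here Remark \ref{r:bijective} is essential: the backward orbit of a point of $R_{(j,n)}$ descends deterministically $R_{(j,n)}\to R_{(j,n-1)}\to\cdots\to R_{(j,1)}\to R_0$, and each backward step acts on the first coordinate by an explicit affine contraction determined by the digits $d^{(j)}_i$. Making this precise amounts to a case analysis of the branches of $\T^{-1}$ showing that $R_{(j,n)}$, and likewise $R_0$, is cut out by finitely many Borel conditions on the functions $\phi_m:=\pi_1\circ\T^{-m}$; since each $\phi_m$ is $\mathcal G$-measurable, this places every piece in $\mathcal G$ and, together with the slab computation, completes the reverse inclusion $\mathcal B\subseteq\mathcal G$ and hence the equality.
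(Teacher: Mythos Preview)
Your argument runs parallel to the paper's: both hinge on the block decomposition $C_1,\ldots,C_\kappa$ of a full fundamental interval and the observation that $\T^{q}$ carries $\Delta(C_\kappa\ldots C_1 a_0\ldots a_{p-1})\times[0,2)\times\{0\}\times\{0\}$ to $\Delta(a_0\ldots a_{p-1})\times\Delta(C_1\ldots C_\kappa)\times\{0\}\times\{0\}$. Where you differ is in two places. First, for rectangles in $R_{(j,n)}$ the paper does not transport from $R_0$ via $\T^{n}$; instead it builds the single word
\[
C=\Delta(C_\kappa\ldots C_{n+1}\, j\, d_1^{(j)}\ldots d_{n-1}^{(j)}\, a_0\ldots a_{p-1}),
\]
verifies $T^{q}C=\Delta(a_0\ldots a_{p-1})$, and then computes $\T^{q}(\pi_1^{-1}(C)\cap R_0)$ in one shot, landing directly on the target rectangle in $R_{(j,n)}$. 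Your transport argument is equivalent and arguably cleaner. Second, the issue you flag as ``the crux'' --- that one must know $R_0\in\mathcal G$ before intersecting $\pi_1^{-1}(\cdot)$ with $R_0$ --- is simply not addressed in the paper: it writes $\pi_1^{-1}(E)\cap R_0$ without comment and proceeds. So your instinct that this step needs separate justification is correct, and your proposed route through the backward maps $\phi_m=\pi_1\circ\T^{-m}$ is a reasonable way to supply it; just be aware that the $\phi_m$-sequence of a point in $R_0$ does depend on the second coordinate (through which piece $\T^{-1}$ selects), so the case analysis is a little more delicate than ``finitely many conditions'' suggests, though it does go through. An alternative shortcut, extending your foothold, is to note $\Delta(3)\subseteq[\beta,2)$, so $\pi_1^{-1}(\Delta(3\,d_1^{(3)}\ldots d_{n-1}^{(3)}))\subseteq R_0$ outright, giving every $R_{(3,n)}\in\mathcal G$ immediately; one can then bootstrap to the $R_{(2,n)}$ and hence to $R_0$.
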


\begin{proof}
First notice that by Lemma \ref{l:generate}, each of the $\sigma$-algebras $\mathcal B_{(j,n)}$ is generated by the direct products of the full fundamental intervals, contained in the rectangle $R_{(j,n)}$. Also, $\mathcal B_0$ is generated by the direct products of the full fundamental intervals. It is clear that $\bigvee_{n=0}^{\infty} \T^n \pi_1^{-1}(\mathcal B([0,2))) \subseteq \mathcal B$. For the other inclusion, first take a generating rectangle in $R_0$:
$$\Delta(a_0 \ldots a_{p-1}) \times \Delta(b_0 \ldots b_{q-1}) \times \{0 \} \times \{0 \},$$
where $\Delta(a_0 \ldots a_{p-1})$ and $\Delta(b_0 \ldots b_{q-1})$ are full fundamental intervals. For the set $\Delta(b_0 \ldots b_{q-1})$ construct the subblocks $C_1, \ldots, C_{\kappa}$ as before. By Lemma \ref{l:full} $\Delta (C_{\kappa} C_{\kappa-1} \ldots C_1 a_0 \ldots a_{p-1})$ is a full fundamental interval of rank $p+q$. Then
$$ \pi_1^{-1} (\Delta (C_{\kappa} C_{\kappa-1} \ldots C_1 a_0 \ldots a_{p-1})) \cap R_0 \quad$$
$$ \quad = \Delta (C_{\kappa} C_{\kappa-1} \ldots C_1 a_0 \ldots a_{p-1}) \times [0,2) \times \{0 \} \times \{ 0 \}.$$
It is a well-known fact that for each full fundamental interval $\Delta(d_0 \ldots d_{n-1})$ and each $i \in \{ 1, \ldots, n-1 \}$, we have $ T^i \Delta (d_0 \ldots d_{n-1}) = \Delta(d_i \ldots d_{n-1})$. This, together with the definitions of the blocks $C_i$ and the transformation $\T$ leads to
$$ \T^q ( \pi_1^{-1} (\Delta (C_{\kappa} C_{\kappa-1} \ldots C_1 a_0 \ldots a_{p-1})) \cap R_0) \quad \quad$$
$$ \quad =  \Delta (a_0 \ldots a_{p-1}) \times \Delta (C_1 C_2 \ldots C_{\kappa}) \times \{ 0 \} \times \{ 0 \}.$$
So
$$ \Delta(a_0 \ldots a_{p-1}) \times \Delta(b_0 \ldots b_{q-1}) \times \{0 \} \times \{0 \} \subseteq \bigvee_{n=0}^{\infty} \T^n \pi_1^{-1}(\mathcal B([0,2))).$$
Now let $\Delta(a_0 \ldots a_{p-1}) \times \Delta(b_0 \ldots b_{q-1}) \times \{j \} \times \{n \}$
be a generating rectangle for $\mathcal B_{(j,n)}$, for $j \in \{2,3\}$ and $n \ge 1$. So $\Delta(a_0 \ldots a_{p-1})$ and $\Delta(b_0 \ldots b_{q-1})$ are again full fundamental intervals. Notice that 
$$\Delta(b_0 \ldots b_{q-1}) \subseteq \Delta(\underbrace{0 \ldots 0}_{n \text{ times}}),$$
which means that $q \ge n$. Also $b_i =0$ and thus $r_{i+1}=i+1$ for all $i \in \{ 0, \ldots, n-1\}$. So, if we divide $b_0 \dots b_{q-1}$ into subblocks $C_i$ as before, we get that $C_1 = C_2 = \ldots =C_n =0$, that $\kappa \ge n$ and that $|C_{n+1}| + \ldots + |C_{\kappa}|=q-n$. Consider the set 
$$ C = \Delta (C_{\kappa} C_{\kappa-1} \ldots C_{n+1} j d^{(j)}_1 \ldots d_{n-1}^{(j)}a_0 \ldots a_{p-1}).
$$
We will show the following.\\
Claim: The set $C$ is a fundamental interval of rank $p+q$ and $T^q C=\Delta(a_0 \ldots a_{p-1})$.

\vskip .2cm

\noindent First notice that
$$ C= \Delta (C_{\kappa} C_{\kappa-1} \ldots C_{n+1}) \cap T^{n-q}  \Delta(j d^{(j)}_1 \ldots d_{n-1}^{(j)}) \cap T^{-q}\Delta(a_0 \ldots a_{p-1}).$$
So obviously,
$$
T^q C \subseteq T^q \Delta (C_{\kappa} C_{\kappa-1} \ldots C_{n+1}) \cap T^n  \Delta(j d^{(j)}_1 \ldots d_{n-1}^{(j)}) \cap \Delta(a_0 \ldots a_{p-1}).
$$
By Lemma \ref{l:full}, $\Delta (C_{\kappa} C_{\kappa-1} \ldots C_{n+1})$ is a full fundamental interval of rank $q-n$, so $ T^q \Delta (C_{\kappa} C_{\kappa-1} \ldots C_{n+1}) = [0,2)$. Now, by the definition of $R_{(j,n)}$ we have that
\begin{equation}\label{q:deltaa}
\Delta(a_0 \ldots a_{p-1}) \subseteq T^n  \Delta(j d^{(j)}_1 \ldots d_{n-1}^{(j)}),
\end{equation}
and thus $T^q C \subseteq   \Delta(a_0 \ldots a_{p-1})$.\\
For the other inclusion, let $z \in \Delta (a_0 \ldots a_{p-1})$. By (\ref{q:deltaa}),
there is an element $y$ in $\Delta(j d^{(j)}_1 \ldots d_{n-1}^{(j)})$, such that $T^n y =z$. And since $ T^{q-n} \Delta (C_{\kappa} C_{\kappa-1} \ldots C_{n+1}) = [0,2)$, there is an $x \in \Delta (C_{\kappa} C_{\kappa-1} \ldots C_{n+1})$ with $T^{q-n}x = y$, so $T^q x = z$. This means that
$$ z \in T^q \Delta (C_{\kappa} C_{\kappa-1} \ldots C_{n+1}) \cap T^n \Delta(j d^{(j)}_1 \ldots d_{n-1}^{(j)}) \cap \Delta(a_0 \ldots a_{p-1}).$$ 
So $ T^q C= \Delta(a_0 \ldots a_{p-1})$ and this proves the claim.

\vskip .2cm

\noindent Consider the set $D= \pi_1^{-1} (C) \cap R_0$. Then as before, we have
$$\T^{q-n} D = \Delta (j d^{(j)}_1 \ldots d^{(j)}_{n-1}a_0 \ldots a_{p-1}) \times \Delta(C_{n+1} C_{n+2} \ldots C_{\kappa}) \times \{0 \} \times \{0 \}.$$
And after $n$ more steps,
\begin{eqnarray*}
\T^q D &=& \Delta(a_0 \ldots a_{p-1}) \times \Delta (\underbrace{00\ldots 0}_{n \text{ times}} C_{n+1} \ldots C_{\kappa}) \times \{ j \} \times \{ n \}\\
&=& \Delta(a_0 \ldots a_{p-1}) \times \Delta(b_0 \ldots b_{q-1}) \times \{j \} \times \{n \}.
\end{eqnarray*}
So, 
$$\Delta(a_0 \ldots a_{p-1}) \times \Delta(b_0 \ldots b_{q-1}) \times \{j \} \times \{n \} \in \bigvee_{n=0}^{\infty} \T^n \pi_1^{-1}(\mathcal B([0,2)))$$
and thus we see that
$$ \mathcal B = \bigvee_{n=0}^{\infty} \T^n \pi_1^{-1}(\mathcal B([0,2))).  \qedhere$$
\end{proof}

\noindent This leads to the following theorem.

\begin{theorem}
The dynamical system $(R, \mathcal B, \nu, \T)$ is a version of the natural extension of the dynamical system $([0,2), \mathcal B([0,2)), \mu, T)$, where $\mu = \nu \circ \pi_1^{-1}$ is an invariant probability measure of $T$, equivalent to the Lebesgue measure on $[0,2)$, whose density function, $h:[0,2) \to [0,2)$, is given by
\begin{eqnarray*} 
h(x) &=&  \frac{1}{16-7\beta} [(1+2\beta)1_{[0, 1/ \beta^3)}(x) + (2+\beta)1_{[1/ 
\beta^3, 1/ \beta^2)}(x)\\
&& + 2\beta 1_{[1/ \beta^2, 1/ \beta)}(x) + \beta^2 1_{[1/ \beta, 1)}(x) + \beta 1_{[1, \beta)}(x) + 1_{[\beta, 2)}(x)].
\end{eqnarray*}
\end{theorem}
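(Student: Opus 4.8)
The verification that $(R, \mathcal B, \nu, \T)$ is a version of the natural extension is already complete: conditions (i) and (ii) were checked directly, (iii) follows from Remark \ref{r:bijective}, and (iv) is the content of Lemma \ref{l:bigvee}. Hence $\mu = \nu \circ \pi_1^{-1}$ is a $T$-invariant probability measure. So the plan is to establish only the remaining two assertions, that $\mu$ is equivalent to $\lambda$ and that its density is the stated step function $h$. Both will follow from a single computation, namely determining the $\bar\lambda$-measure of the fibres of the projection $\pi_1$.

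Since $\nu$ is a normalized copy of Lebesgue measure on $R$, Fubini's theorem gives, for every Borel set $E \subseteq [0,2)$,
$$ \mu(E) = \nu(\pi_1^{-1}(E)) = \frac{1}{32-14\beta} \int_E H(x)\, d\lambda(x), $$
where $H(x)$ is the total length, in the second coordinate, of the fibre $\pi_1^{-1}(x)$. Thus $h = H/(32-14\beta)$, and the task reduces to computing $H$. The fibre over $x$ always meets $R_0$ in a segment of length $2$, and it meets the rectangle $R_{(j,n)}$ in a segment of length $\lambda(\Delta(\underbrace{0\ldots 0}_{n}))= 2/\beta^n$ precisely when $x \in T^n \Delta(j d_1^{(j)} \ldots d_{n-1}^{(j)})$. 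Therefore
$$ H(x) = 2 + \sum_{n \ge 1}\frac{2}{\beta^n}\Big( 1_{T^n\Delta(2 d_1^{(2)}\ldots d_{n-1}^{(2)})}(x) + 1_{T^n\Delta(3 d_1^{(3)}\ldots d_{n-1}^{(3)})}(x)\Big). $$

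The next step is to insert the images $T^n \Delta(j d_1^{(j)} \ldots d_{n-1}^{(j)})$ computed just before the definition of the rectangles. Every one of them is an interval of the form $[0,c)$ with $c \in \{1/\beta^3, 1/\beta^2, 1/\beta, 1, \beta\}$. The two smallest, $[0,1/\beta^3)$ and $[0,1/\beta^2)$, arise from a single rectangle each (namely $R_{(3,1)}$ and $R_{(3,2)}$), whereas $[0,1/\beta)$, $[0,1)$ and $[0,\beta)$ each arise from an entire period-$3$ family of rectangles, for both $j=2$ and $j=3$. Since these intervals are nested, $H$ is a step function whose jumps occur exactly at the points $1/\beta^3, 1/\beta^2, 1/\beta, 1, \beta$. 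For each threshold I collect the weights $2/\beta^n$ of all rectangles whose image is $[0,c)$; the three largest thresholds give geometric series of ratio $\beta^{-3}$, which I sum using the golden-mean identities $\beta^2 = \beta+1$ and $\beta^3 - 1 = 2\beta$. Adding the resulting weights cumulatively to the constant $2$ coming from $R_0$ yields $H$ explicitly on each of the six subintervals $[0,1/\beta^3), [1/\beta^3,1/\beta^2), \ldots, [\beta,2)$; dividing by $32-14\beta = 2(16-7\beta)$ produces the six coefficients in the statement. Because the $R_0$-contribution $2$ is present on all of $[0,2)$, we have $H \ge 2 > 0$ everywhere, so $h$ is bounded away from $0$ (and is clearly bounded above), whence $\mu$ is equivalent to $\lambda$.

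The conceptual content is light: once the fibre-length interpretation of the density is in place, everything rests on the already-computed images of the sets $B_n$ under $T^n$. The only place that demands care is the bookkeeping of the period-$3$ pattern — correctly matching each $R_{(j,n)}$ to its threshold $c$ and keeping the ranges of summation for $j=2$ and $j=3$ distinct, since the two families begin at different values of $n$ — followed by the routine but delicate golden-mean arithmetic that collapses the geometric series into the stated constants. This matching and summation is where I expect the main risk of error to lie.
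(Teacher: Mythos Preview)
Your proposal is correct and follows the same route as the paper: the natural-extension conditions (i)--(iv) are dispatched by the same references (the properties of $\pi_1$, Remark~\ref{r:bijective}, Lemma~\ref{l:bigvee}), and the density is read off from the fibre structure of $\pi_1$. The paper's own proof is a single sentence that merely cites these ingredients and states the density without computation; your Fubini/fibre-length argument is exactly the implicit calculation the paper omits, so you are supplying detail rather than taking a different path.
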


\begin{proof}
The proof follows from Remark \ref{r:generate}, the properties of $\pi_1$ and Lemma \ref{l:bigvee}.
\end{proof}

\section{Towering the orbits}
For the second version of the natural extension, we will define a transformation on a certain subset of $[0,2)\times [0,2\beta)$, using the transformation $\T$, defined in the previous section. Define for $n \ge 1$ the following intervals:
$$ I_{(2,n)} = \left[\frac{2}{\beta^2} + \frac{2}{\beta^2}\sum_{j=1}^{n-1} \frac{1}{\beta^j}, \frac{2}{\beta^2} + \frac{2}{\beta^2}\sum_{j=1}^{n} \frac{1}{\beta^j} \right)$$
and
$$ I_{(3,n)} = \left[2 + \frac{2}{\beta^2}\sum_{j=1}^{n-1} \frac{1}{\beta^j}, 2 + \frac{2}{\beta^2}\sum_{j=1}^{n} \frac{1}{\beta^j}\right),$$
where $\displaystyle \sum_{j=1}^{0} \frac{1}{\beta^j} =0$. Let $I_0 = [0,\frac{2}{\beta^2})$. Notice that all of these rectangles are disjoint and that $\displaystyle \bigcup_{n=1}^{\infty} I_{(2,n)} = \left[ \frac{2}{\beta^2}, 2 \right)$ and $\displaystyle \bigcup_{n=1}^{\infty} I_{(3,n)} = [2, 2 \beta)$, so that these intervals together with $I_0$ form a partition of $[0,2\beta)$. Now define the subset $I \subseteq [0,2)\times [0,2\beta)$ by
$$I = ([0,2) \times I_0) \cup \bigcup_{n=1}^{\infty} (([0, T^{n-1} 1) \times I_{(2,n)}) \cup ([0, T^{n-1} \frac{1}{\beta^3}) \times I_{(3,n)})) $$
and let the function $\phi:I \to R$ be given by
$$\phi(x,y) = \left\{
\begin{array}{ll}
(x, \beta^2 (y-\frac{2}{\beta^2}-\frac{2}{\beta^3}\sum_{j=0}^{n-1} \frac{1}{\beta^j}), 2,n), & \text{if } y \in I_{(2,n)},\\
(x, \beta^2 (y-2-\frac{2}{\beta^3}\sum_{j=0}^{n-1}\frac{1}{\beta^j}), 3,n), & \text{if } y \in I_{(3,n)},\\
(x, \beta^2 y,0,0), & \text{if } y \in I_0.\\
\end{array}
\right.$$
So $\phi$ maps $I_0$ to $R_0$ and for all $n \ge 1$, $j \in \{ 2,3 \}$, $\phi$ maps $I_{(j,n)}$ to $R_{(j,n)}$. Clearly, $\phi$ is a measurable bijection. Define the transformation $\tilde {\T}: I \to I$, by 
$$\tilde {\T} (x,y) = \phi^{-1} (\T (\phi(x,y))).$$
It is straightforward to check that $\tilde{\T}$ is invertible. In Figure \ref{f:tower} we see this transformation.
\begin{figure}[h]
\begin{center}
\includegraphics[width=12cm]{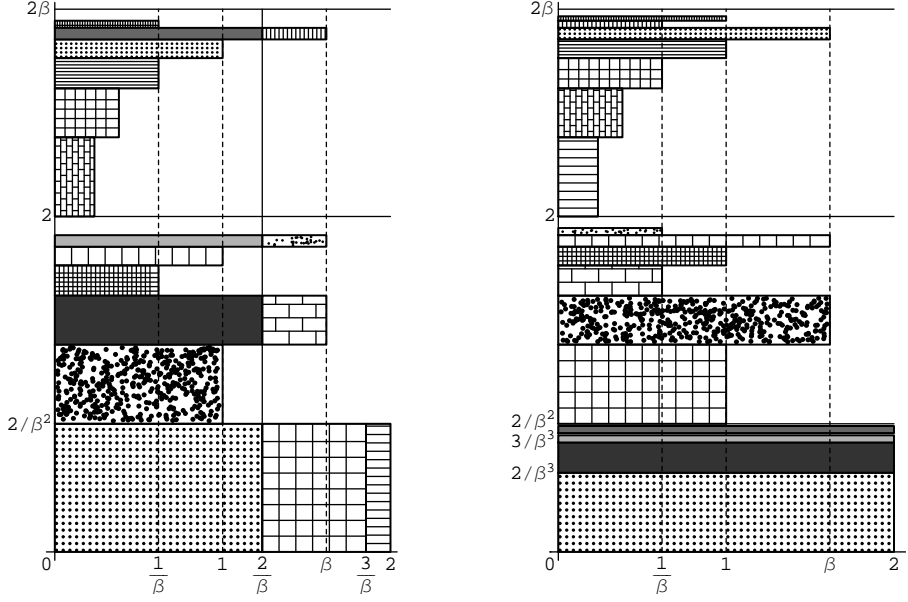}
\caption{The transformation $\tilde{\T}$ maps the regions on the left to the regions on the right.}
\label{f:tower}
\end{center}
\end{figure}
Let $\mathcal I$ be the collection of Borel sets on $I$. If $\lambda_2$ is the 2-dimensional Lebesgue measure, then
$$ \lambda_2 (I) = 78-46 \beta = \frac{1}{\beta^2} \bar \lambda(R).$$
Define a measure $\tilde{\nu}$ on $(I, \mathcal I)$ by setting $\tilde{\nu} (E) = (\nu \circ \phi)(E)$, for all $E \in \mathcal I$. Then $\phi$ is measure preserving and the systems $(R, \mathcal B, \mu, \T)$ and $(I, \mathcal I, \tilde{\nu}, \tilde{\T})$ are isomorphic. Notice that $\tilde{\nu}$ is the normalized 2-dimensional Lebesgue measure on $(I, \mathcal I)$ and that the projection of $\tilde{\nu}$ on the first coordinate gives $\mu$ again.
The following lemma is now enough to show that $(I, \mathcal I, \tilde{\nu}, \tilde{\T})$ is a version of the natural extension of $([0,2), \mathcal B([0,2)), \mu, T)$.

\begin{lemma}
The $\sigma$-algebras $\mathcal I$ and $\bigvee_{n=0}^{\infty} \tilde{\T}^n \pi_1^{-1} (\mathcal B([0,2))) $
are equal.
\end{lemma}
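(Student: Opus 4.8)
The plan is to transport the identity of Lemma \ref{l:bigvee} across the isomorphism $\phi$, rather than repeat the fundamental-interval bookkeeping from scratch. Three ingredients drive the argument: first, $\phi$ is a bi-measurable bijection, so that $\phi^{-1}(\mathcal B) = \mathcal I$ and $\phi^{-1}$ commutes with all countable $\sigma$-algebra operations (complements, countable unions, and the join $\bigvee$); second, $\phi$ preserves the first coordinate, so that $\pi_1 \circ \phi = \pi_1$ and hence $\phi^{-1} \circ \pi_1^{-1} = \pi_1^{-1}$; and third, the defining conjugacy $\tilde{\T} = \phi^{-1} \circ \T \circ \phi$, which rearranges to $\T^n = \phi \circ \tilde{\T}^n \circ \phi^{-1}$ and gives, for any set $S \subseteq R$, the identity $\phi^{-1}(\T^n S) = \tilde{\T}^n(\phi^{-1} S)$.

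First I would record that $\phi$, being a scaling by $\beta^2$ together with a translation on each of the pieces $I_0$ and $I_{(j,n)}$, is a Borel isomorphism; consequently $\phi^{-1}$ carries $\mathcal B$ exactly onto $\mathcal I$ and respects the join. Applying $\phi^{-1}$ to both sides of the identity $\mathcal B = \bigvee_{n=0}^{\infty} \T^n \pi_1^{-1}(\mathcal B([0,2)))$ supplied by Lemma \ref{l:bigvee} then yields $\mathcal I = \bigvee_{n=0}^{\infty} \phi^{-1}\bigl(\T^n \pi_1^{-1}(\mathcal B([0,2)))\bigr)$.

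The core of the proof is then to identify each term $\phi^{-1}\bigl(\T^n \pi_1^{-1}(\mathcal B([0,2)))\bigr)$ with $\tilde{\T}^n \pi_1^{-1}(\mathcal B([0,2)))$. For a generating set $\pi_1^{-1}(A)$ with $A \in \mathcal B([0,2))$, I would use $\T^n = \phi \circ \tilde{\T}^n \circ \phi^{-1}$ to write $\phi^{-1}\bigl(\T^n \pi_1^{-1}(A)\bigr) = \tilde{\T}^n\bigl(\phi^{-1}\pi_1^{-1}(A)\bigr)$, and then invoke that $\phi$ fixes the first coordinate to get $\phi^{-1}\pi_1^{-1}(A) = (\pi_1 \circ \phi)^{-1}(A) = \pi_1^{-1}(A)$, now read as a subset of $I$. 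Hence $\phi^{-1}\bigl(\T^n \pi_1^{-1}(A)\bigr) = \tilde{\T}^n \pi_1^{-1}(A)$; taking the join over all such $A$ and over $n \ge 0$ gives $\mathcal I = \bigvee_{n=0}^{\infty} \tilde{\T}^n \pi_1^{-1}(\mathcal B([0,2)))$, which is the assertion.

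I do not anticipate a serious obstacle, since the substance is entirely contained in Lemma \ref{l:bigvee} and the present statement is a formal consequence of the conjugacy. The only points demanding care are measure-theoretic: confirming that $\phi$ and $\phi^{-1}$ are genuinely bi-measurable, so that pushing and pulling $\sigma$-algebras under $\phi$ behaves as expected, and that $\phi^{-1}$ commutes with the join $\bigvee$ — both of which follow from $\phi$ being a Borel isomorphism that fixes the first coordinate. A self-contained alternative would be to rerun the proof of Lemma \ref{l:bigvee} verbatim, tracking full fundamental intervals through $\tilde{\T}$ in place of $\T$; but because $\tilde{\T}$ was constructed precisely to be conjugate to $\T$, this would only reproduce the same combinatorics, so the conjugacy route is the cleaner one.
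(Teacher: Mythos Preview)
Your proof is correct. Both the paper and you invoke the isomorphism $\phi$, but you use it differently. The paper first identifies generators of $\mathcal I$ --- direct products of full fundamental intervals on the lower block $[0,2)\times I_0 \cup \bigcup_n [0,T^{n-1}1)\times I_{(2,n)}$, and products with translated intervals $2+\Delta(b_0\ldots b_{n-1})$ on the upper block --- and then says that, because $\tilde{\T}$ is isomorphic to $\T$, one can rerun the combinatorial argument of Lemma~\ref{l:bigvee} to show each generator lies in $\bigvee_{n}\tilde{\T}^n\pi_1^{-1}(\mathcal B([0,2)))$. You instead transport the \emph{conclusion} of Lemma~\ref{l:bigvee} wholesale: since $\phi$ is a Borel isomorphism with $\pi_1\circ\phi=\pi_1$ and $\tilde{\T}=\phi^{-1}\circ\T\circ\phi$, applying $\phi^{-1}$ to the identity $\mathcal B=\bigvee_n \T^n\pi_1^{-1}(\mathcal B([0,2)))$ immediately yields the desired statement. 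Your route is shorter and avoids having to describe generators of $\mathcal I$ at all; the paper's route is more explicit about what those generators look like, which may be of independent interest but is not needed for the lemma itself. The one point you flagged as needing care --- that $\phi^{-1}$ commutes with $\bigvee$ --- is indeed automatic for a bi-measurable bijection, so there is no gap.
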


\begin{proof}
It is easy to see that $\bigvee_{n=0}^{\infty} \tilde{\T}^n \pi_1^{-1} (\mathcal B([0,2))) \subseteq \mathcal I$.
For the other inclusion, notice that the direct products of full fundamental intervals contained in
$$([0,2) \times I_0) \cup \bigcup_{n=1}^{\infty} ( [0, T^{n-1} 1) \times I_{(2,n)}),$$
generate the restriction of $\mathcal I$ to this set. If $\Delta(b_0 \ldots b_{n-1}) \in \Delta^{(n)}$ is full in $[0, \frac{2}{\beta})$, then the set $2+ \Delta(b_0 \ldots b_{n-1})$ is a subset of $[2, 2\beta)$. So the direct products of full fundamental intervals in $[0, \beta)$ and sets of the form $2+ \Delta(b_0 \ldots b_{n-1})$ contained in $\bigcup_{n=1}^{\infty} ([0, T^{n-1} \frac{1}{\beta^3}) \times I_{(3,n)})$, generate the restriction of $\mathcal I$ to this set. Since $\tilde{\T}$ is isomorphic to $\T$, the fact that
$$\mathcal I \subseteq \bigvee_{n=0}^{\infty} \tilde{\T}^n \pi_1^{-1} (\mathcal B([0,2)))$$
now can be proven in a way similar to the proof of Lemma \ref{l:bigvee}.
\end{proof}

\section{Concluding remark}

In the previous sections we have defined two dynamical systems that are versions of the natural extension of the dynamical system $([0,2), \mathcal B([0,2)), \mu, T)$, where $T$ is the greedy $\beta$-transformation with deleted digits for $\beta = \frac{1+\sqrt 5}{2}$ and $A=\{ 0,2,3\}$. This gave us the possibility to find the density function of the invariant measure of $T$, equivalent to the Lebesgue measure on $[0,2)$. An important feature of the transformation $T$, that was used in both versions is that the orbits of the points $1$ and $\frac{1}{\beta^3}$ and the interval $\Delta(3)$ are disjoint. If this would not be the case, defining a version of the natural extension of a greedy $\beta$-transformation with three deleted digits would require extra effort. It is probably the first version of the natural extension that can be adapted to this most easily.

\end{document}